 \newtheorem{theo}{Theorem}[section]
 \newtheorem{lem}{Lemma}[section]
 \newtheorem{prop}{Proposition}[section]
 \newtheorem{cor}{Corollary}[section]
 \newtheorem{defi}{Definition}[section]
 \newtheorem{rem}{Remark}[section]
 \newtheorem{ex}{Example}[section]
 \newtheorem{notation}{Notation}[section]
\begin{document}

{Daniel Cao Labora $^1$, Rosana Rodr\'iguez-L\'opez $^2$}\\

\textbf{From fractional order equations to integer order equations}\\

This document is a preprint of a currently submitted article. There might be significant differences between this manuscript and the future final version  (adding, deleting or improving some contents). The authors do strongly recommend to the reader to take this document as a sketch (with proofs) of the most important results of a future final version. In this sense, the authors request the reader to consult the final document, when published in a scientific journal, for a more detailed and corrected version.
 \begin{abstract}

The main goal of this article is to show a new method to solve some Fractional Order Integral Equations (FOIE), more precisely the ones which are linear, have constant coefficients and all the integration orders involved are rational. The method essentially turns a FOIE into an Ordinary Integral Equation (OIE) by applying a suitable fractional integral operator.

After discussing the state of the art, we present the idea of our construction in a particular case (Abel integral equation). After that, we propose our method in a general case, showing that it does work
 when dealing with a family of ``additive'' operators over a vector space. Later, we show that our construction is always possible when dealing with any FOIE under the 
 above-mentioned hypotheses.
 Furthermore, it is shown that our construction is ``optimal'' in the sense that the OIE that we obtain has the least possible order.

 \end{abstract}

 \vspace*{-16pt}

\section{Introduction}
The topic of fractional calculus has achieved a relevant position in the mathematical study of problems corresponding to different fields of application, and hence, 
in the last years, its interest has increased considerably \cite{MMKA}. Different concepts of integrals and derivatives of fractional order have been proposed based on the iteration of classical  operators  and the extension of the definitions to non integer orders \cite{KST,Pod,samko}. In many of these concepts, the Gamma function plays an essential role in their construction, and it is also present in the expression of the explicit solutions to the corresponding linear fractional differential equations, and in the study of the properties of the solutions to nonlinear problems. These calculations are sometimes tedious due to the essential differences of fractional operators with respect to the 
basic properties of the classical ones.

In this work, we show a new method to solve linear fractional order integral equations with constant coefficients, in the special case where the orders of  integration that appear are rational.  The method proposed allows to convert each of these problems into a classical integral equation, what is achieved by applying a fractional operator chosen specifically. We illustrate the applicability of the method to different particular examples such as, for instance, Abel equation. We also propose a general theoretical framework, 
useful under the restriction of additivity of operators, and with nice implications for the case of rational order integral equations.

We begin by remembering some basic notions in fractional calculus.
Later, in Section 2, we give some motivations for the problem studied. After that, in Section 3, we establish the general theoretical framework and, in Section 4, the general procedure is exposed based on the concept of generalized polynomials. Besides, the optimality of the construction is justified. Section 5 is devoted to the application of the method to fractional order integral equations with constant coefficients, the study of the properties of their solutions and the presentation of some examples. Finally, in Section 6, some considerations for fractional differential equations are made.

\subsection{Basic tools in fractional calculus}

Throughout this paper, $f$ is assumed to be a real valued function of real variable defined on an interval $(a,b)$. Furthermore, we will assume that $f \in L^1(a,b)$ and $x$ will denote an arbitrary point $x \in (a,b)$. 

The first step is, obviously, to introduce the definitions of fractional integral and fractional derivative.
\begin{defi} 
We define the left Riemann-Liouville fractional integral of $f$ of order $\alpha>0$ from $a \in \mathbb{R} \cup \{-\infty,\infty\}$ as $$(I_{a^+}^{\alpha} f)(x):=\frac{1}{\Gamma(\alpha)} \int_{a}^x \frac{f(t)}{(x-t)^{1-\alpha}}dt \in L^1(a,b),$$ and the right analogue from $b \in \mathbb{R} \cup \{-\infty,\infty\}$ as $$(I_{b^-}^{\alpha} f)(x):=\frac{1}{\Gamma(\alpha)} \int_{x}^b \frac{f(t)}{(t-x)^{1-\alpha}}dt \in L^1(a,b).$$
\end{defi}
\begin{rem} The definition could have been also made for complex values of $\alpha$. However, at the end of the day, we will be only interested in the case where $\alpha$ takes rational values, so it is enough to consider real values of $\alpha$.
\end{rem}
\begin{rem} \label{obs}
The property of $f$ belonging to $L^1(a,b)$ is preserved by fractional integration.
\end{rem}

\begin{defi} We define the left Riemann-Liouville fractional derivative of $f$ of order $0<\alpha<1$ from $a$ as the following value (provided it exists): $$(D_{a^+}^{\alpha} f)(x):=\frac{d}{dx}(I_{a^+}^{1-\alpha}f)(x)=\frac{1}{\Gamma(1-\alpha)} \frac{d}{dx} \int_{a}^x \frac{f(t)}{(x-t)^{\alpha}}dt.$$ The right analogue from $b$ is $$(D_{b^-}^{\alpha} f)(x):=-\frac{d}{dx}(I_{b^-}^{1-\alpha}f)(x)=\frac{-1}{\Gamma(1-\alpha)} \frac{d}{dx} \int_{x}^b \frac{f(t)}{(x-t)^{\alpha}}dt.$$
\end{defi}

It is impossible to extend the previous definition to $\alpha=1$ in a reasonable way since $\Gamma(1-\alpha)=\infty$. Furthermore, for instance, when $f$ is a constant function the integral involved in the previous definition is divergent. So, it is clear that another definition is required for the derivatives of higher orders.

\begin{defi} We define the left Riemann-Liouville fractional derivative of $f$ of order $\alpha>0$, $\alpha \not \in \mathbb{Z}$, from $a$ as the following value (provided it exists): $$(D_{a^+}^{\alpha} f)(x):=\Big(\frac{d}{dx}\Big)^{[\alpha]}(D_{a^+}^{\{\alpha\}}f)(x)=\Big(\frac{d}{dx}\Big)^{[\alpha]+1}(I_{a^+}^{1-\{\alpha\}}f)(x),$$ where $\{\alpha\}$ is the decimal part of $\alpha$ and $[\alpha]$ is the largest integer smaller than or equal to $\alpha$ (integer part of $\alpha$). Furthermore, we give the analogous definition for the right derivative from $b$ as: $$(D_{b^-}^{\alpha} f)(x):=\Big(-\frac{d}{dx}\Big)^{[\alpha]}(D_{b^-}^{\{\alpha\}}f)(x)=\Big(-\frac{d}{dx}\Big)^{[\alpha]+1}(I_{b^-}^{1-\{\alpha\}}f)(x).$$
\end{defi}

If $\alpha$ is a natural number, the left derivative of order $\alpha$ is defined as the usual one. In the right case, it is  the usual one up to a sign $(-1)^\alpha$. 

The essential properties that are going to be used are the following ones, whose proof can be found in \cite{samko}.

\begin{lem}[\cite{samko}] \label{cancel} Given any function $f:\mathbb{R} \to \mathbb{R}$ with $f \in L^1(a,b)$, if $\alpha > \beta> 0$, we have $D_{a^+}^\beta I_{a^+}^\alpha f = I_{a^+}^{\alpha-\beta} f$, and the right analogue is also valid.
\end{lem}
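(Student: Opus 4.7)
The plan is to reduce the claim to the semigroup property of fractional integration together with the classical fact that $d/dx$ inverts a single integration. Set $n=[\beta]+1$ and $\sigma=1-\{\beta\}$, so that $\beta=n-\sigma$ with $n\in\mathbb{Z}_{\geq 1}$ and $\sigma\in(0,1]$. Unfolding the definition of the Riemann--Liouville derivative of order $\beta$ gives
\[
D_{a^+}^{\beta}\, I_{a^+}^{\alpha} f \;=\; \Big(\tfrac{d}{dx}\Big)^{n}\, I_{a^+}^{\sigma}\, I_{a^+}^{\alpha} f.
\]

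Next I would invoke the semigroup identity $I_{a^+}^{\sigma}\, I_{a^+}^{\alpha} f = I_{a^+}^{\sigma+\alpha} f$, which holds a.e.\ for any $f\in L^1(a,b)$ and positive exponents (a standard consequence of Fubini's theorem together with the beta-integral $\int_0^1 u^{\sigma-1}(1-u)^{\alpha-1}du=B(\sigma,\alpha)$; see \cite{samko}). Using $\sigma+\alpha=(\alpha-\beta)+n$, the right-hand side above becomes
\[
\Big(\tfrac{d}{dx}\Big)^{n}\, I_{a^+}^{(\alpha-\beta)+n} f.
\]

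The final step is the identity $\big(d/dx\big)^{n}\, I_{a^+}^{\gamma+n} g = I_{a^+}^{\gamma} g$ whenever $\gamma>0$ and $g\in L^1(a,b)$. To see this, apply the semigroup property once more to write $I_{a^+}^{\gamma+n} g = I_{a^+}^{n}\, I_{a^+}^{\gamma} g$, and note that $I_{a^+}^{n}$ is the usual $n$-fold iterated Lebesgue integral with base point $a$. Since $\gamma>0$ guarantees $I_{a^+}^{\gamma} g \in L^1(a,b)$ (Remark \ref{obs}), an $n$-fold application of the Lebesgue version of the fundamental theorem of calculus recovers $I_{a^+}^{\gamma} g$ a.e. Setting $g=f$ and $\gamma=\alpha-\beta>0$ delivers the claimed equality. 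The right-sided analogue is obtained by a completely parallel argument, with the signs $(-1)^n$ and the orientation of the integration intervals producing matching cancellations.

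The main obstacle, and essentially the only substantive ingredient, is the $L^1$-valid semigroup law for the fractional integral: one must verify via Fubini that the double integral defining $I_{a^+}^{\sigma} I_{a^+}^{\alpha} f$ converges absolutely a.e.\ and collapses into the single integral $I_{a^+}^{\sigma+\alpha} f$. Once this (and the resulting $L^1$ regularity that allows iterated differentiation to act as the inverse of $I_{a^+}^{n}$) is secured, the rest is bookkeeping between the integer part $n=[\beta]+1$ and the fractional part $\sigma=1-\{\beta\}$ in the definition of $D_{a^+}^{\beta}$.
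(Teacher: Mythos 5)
Your proof is correct and is essentially the standard argument that the paper itself delegates to \cite{samko} (the paper states the lemma without proof): unfold the Riemann--Liouville derivative as $(d/dx)^{n}I_{a^+}^{\sigma}$ with $n=[\beta]+1$, $\sigma=1-\{\beta\}$, apply the semigroup law $I_{a^+}^{\sigma}I_{a^+}^{\alpha}=I_{a^+}^{\sigma+\alpha}$ (Lemma \ref{integral}), and cancel the remaining integer-order integration by $n$-fold a.e.\ differentiation. The only cosmetic caveat is that for integer $\beta$ the paper defines $D_{a^+}^{\beta}$ directly as the ordinary derivative rather than through $[\beta]$ and $\{\beta\}$, but your argument handles that case by the same two steps, so there is no gap.
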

\begin{lem}[\cite{samko}] \label{integral} Given any function $f:\mathbb{R} \to \mathbb{R}$ with $f \in L^1(a,b)$, we have $I_{a^+}^\alpha I_{a^+}^\beta f = I_{a^+}^{\alpha+\beta} f$, and the right analogue is also valid.
\end{lem}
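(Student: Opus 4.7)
The plan is to prove the semigroup property directly from the definition of the Riemann-Liouville fractional integral by writing out both compositions, applying Fubini's theorem to interchange the order of integration, and then evaluating the resulting inner integral in closed form using the Beta function identity. I will only write out the details for the left-sided version; the right-sided case is completely symmetric.

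First, I would unfold the definitions to obtain
\begin{equation*}
(I_{a^+}^{\alpha} I_{a^+}^{\beta} f)(x) = \frac{1}{\Gamma(\alpha)\Gamma(\beta)} \int_{a}^{x} \frac{1}{(x-t)^{1-\alpha}} \int_{a}^{t} \frac{f(s)}{(t-s)^{1-\beta}}\, ds\, dt.
\end{equation*}
The next step is to justify swapping the order of integration over the triangular region $\{(s,t) : a \le s \le t \le x\}$. This is the main technical point: one needs the integrand $(x-t)^{\alpha-1}(t-s)^{\beta-1}|f(s)|$ to be integrable over this triangle. Since $f \in L^1(a,b)$ by assumption, and the kernel is a product of two weakly singular factors each of which is integrable in its own variable over the relevant interval (as will become clear after computing the inner integral), Tonelli's theorem applies and Fubini's theorem justifies the swap.

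After swapping, I get
\begin{equation*}
(I_{a^+}^{\alpha} I_{a^+}^{\beta} f)(x) = \frac{1}{\Gamma(\alpha)\Gamma(\beta)} \int_{a}^{x} f(s) \left( \int_{s}^{x} \frac{dt}{(x-t)^{1-\alpha}(t-s)^{1-\beta}} \right) ds.
\end{equation*}
The inner integral is then computed via the standard substitution $t = s + (x-s)u$, with $u \in [0,1]$, which transforms it into $(x-s)^{\alpha+\beta-1}\int_{0}^{1} u^{\beta-1}(1-u)^{\alpha-1}\, du = (x-s)^{\alpha+\beta-1} B(\beta,\alpha) = (x-s)^{\alpha+\beta-1}\,\Gamma(\alpha)\Gamma(\beta)/\Gamma(\alpha+\beta)$.

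Substituting this back, the $\Gamma(\alpha)\Gamma(\beta)$ factors cancel, and I arrive at
\begin{equation*}
(I_{a^+}^{\alpha} I_{a^+}^{\beta} f)(x) = \frac{1}{\Gamma(\alpha+\beta)} \int_{a}^{x} \frac{f(s)}{(x-s)^{1-\alpha-\beta}}\, ds = (I_{a^+}^{\alpha+\beta} f)(x),
\end{equation*}
which is the desired identity. The expected obstacle is the rigorous justification of Fubini; once that is in place, the rest is a routine Beta-function computation, and the fact that the result again lies in $L^1(a,b)$ follows from Remark \ref{obs}.
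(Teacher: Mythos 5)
Your proof is correct and is precisely the classical argument (Dirichlet's formula: Fubini--Tonelli over the triangle $a\le s\le t\le x$ followed by the Beta-function evaluation of the inner integral) that appears in the reference \cite{samko}; the paper itself gives no proof and simply cites that source. The only point worth tightening is the Fubini justification: apply Tonelli to the nonnegative integrand with $|f|$ first, observe that the swapped iterated integral evaluates to a constant times $(I_{a^+}^{\alpha+\beta}|f|)(x)$, which is finite for a.e.\ $x$ since $I_{a^+}^{\alpha+\beta}|f|\in L^1(a,b)$, and only then invoke Fubini for $f$ itself.
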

\begin{lem}[\cite{samko}] \label{tecnico} Given any function $f:\mathbb{R} \to \mathbb{R}$ with $f \in I^{\alpha+\beta}(L^1(a,b))$, we have $D_{a^+}^\alpha D_{a^+}^\beta f = D_{a^+}^{\alpha+\beta} f$, and the right analogue is also valid.
\end{lem}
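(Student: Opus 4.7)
My plan is the following. Since $f \in I_{a^+}^{\alpha+\beta}(L^1(a,b))$, there exists $g \in L^1(a,b)$ with $f = I_{a^+}^{\alpha+\beta} g$. The identity will be established by showing that both $D_{a^+}^{\alpha+\beta} f$ and $D_{a^+}^{\alpha} D_{a^+}^{\beta} f$ reduce to $g$.

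The right-hand side is a direct unfolding of the definition of the Riemann--Liouville derivative, combined with the semigroup property in Lemma \ref{integral}:
\[
D_{a^+}^{\alpha+\beta} f = \Big(\frac{d}{dx}\Big)^{[\alpha+\beta]+1} I_{a^+}^{1-\{\alpha+\beta\}} I_{a^+}^{\alpha+\beta} g = \Big(\frac{d}{dx}\Big)^{[\alpha+\beta]+1} I_{a^+}^{[\alpha+\beta]+1} g,
\]
and this last expression equals $g$ almost everywhere by the iterated Fundamental Theorem of Calculus, which is valid because $g \in L^1(a,b)$.

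For the left-hand side I would proceed in two cancellations. First, since $\alpha+\beta > \beta > 0$, Lemma \ref{cancel} yields $D_{a^+}^{\beta} f = D_{a^+}^{\beta} I_{a^+}^{\alpha+\beta} g = I_{a^+}^{\alpha} g$, and this still lies in $L^1(a,b)$ by Remark \ref{obs}. Then I would repeat the computation carried out for the right-hand side with $\alpha$ playing the role of $\alpha+\beta$ and $I_{a^+}^{\alpha} g$ in place of $f$, which gives $D_{a^+}^{\alpha} D_{a^+}^{\beta} f = D_{a^+}^{\alpha} I_{a^+}^{\alpha} g = g$ and closes the argument.

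I expect the only genuine subtlety to be the ``diagonal'' identity $D_{a^+}^{\gamma} I_{a^+}^{\gamma} h = h$ for $h \in L^1(a,b)$, since the hypothesis of Lemma \ref{cancel} is the strict inequality $\alpha > \beta$ and therefore does not formally cover the case of equal orders. As the displayed computation shows, this gap is filled by reducing the diagonal statement to $(d/dx)^{n} I_{a^+}^{n} h = h$ with $n=[\gamma]+1$, a classical consequence of the Lebesgue differentiation theorem; the case where $\gamma$ is itself an integer is handled analogously using the convention that $D_{a^+}^{\gamma}$ is then the usual $\gamma$-th derivative. The right analogue is entirely symmetric once one tracks the sign factor $-1$ built into the definition of $D_{b^-}$ and the orientation of $I_{b^-}$, since these sign adjustments cancel in exactly the same pattern as in the left case.
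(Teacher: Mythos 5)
The paper offers no proof of this lemma to compare against: it is simply quoted from Samko--Kilbas--Marichev, alongside Lemmas \ref{cancel} and \ref{integral}, as a known property. Your argument is correct and has the merit of being self-contained within the paper's own toolkit: writing $f=I_{a^+}^{\alpha+\beta}g$ with $g\in L^1(a,b)$ and showing that both $D_{a^+}^{\alpha+\beta}f$ and $D_{a^+}^{\alpha}D_{a^+}^{\beta}f$ collapse to $g$ uses only the definition of the Riemann--Liouville derivative, the semigroup property of Lemma \ref{integral}, the cancellation of Lemma \ref{cancel}, and the diagonal identity $D_{a^+}^{\gamma}I_{a^+}^{\gamma}h=h$, which you rightly flag as the one step not literally covered by the quoted lemmas and correctly reduce to $\big(\frac{d}{dx}\big)^{n}I_{a^+}^{n}h=h$ with $n=[\gamma]+1$, valid almost everywhere by absolute continuity of $I_{a^+}^{1}h$ and the Lebesgue differentiation theorem. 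Two small points deserve an explicit sentence in a polished write-up: first, all identities here hold in the a.e.\ sense, consistent with the paper's treatment of $I_{a^+}^{\alpha}$ as an operator on $L^1(a,b)$; second, when $\beta$ (or $\alpha$, or $\alpha+\beta$) is a positive integer the paper's definition of the derivative switches to the ordinary one, so the intermediate step $D_{a^+}^{\beta}I_{a^+}^{\alpha+\beta}g=I_{a^+}^{\alpha}g$ should then be read as the classical identity $\big(\frac{d}{dx}\big)^{\beta}I_{a^+}^{\beta}\big(I_{a^+}^{\alpha}g\big)=I_{a^+}^{\alpha}g$; and the degenerate cases $\alpha=0$ or $\beta=0$ are immediate since $D_{a^+}^{0}$ is the identity. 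None of these affects the substance: the proposal is a valid proof.
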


\subsection{The space $\mathcal{S}_{a^+}$}

Until now, we have mentioned that a suitable space to deal with fractional integration is $L^1(a,b)$. Essentially, a fractional integral is an endomorphism in that space and we have the additivity of orders of fractional integrals. However, in the case of fractional derivatives, we have no such a good context. So we will construct a suitable space to work and, from now on, we will use some additional notation. The first step is calling $$\mathcal{S}_{a^+}=I^{\infty}(L^1(a,b)):= \bigcap_{\alpha \in \mathbb{R}^+ \cup \{0\}} I^{\alpha} (L^1(a,b)).$$ 

\begin{rem} The first thing to observe is that the previous intersection is nonempty. The proof of the last sentence is trivial because $f=0$ belongs to the intersection. There are more functions that belong to the intersection, however, there exist many important functions that do not belong to the intersection, for example, $f(x)=x$. 
\end{rem}

Secondly, it would be nice if we could simplify the definition of $\mathcal{S}_{a^+}$ to a countable intersection. This is possible because we have the property $I^{\alpha}(L^1(a,b)) \subset I^{\beta}(L^1(a,b))$ whenever $\alpha > \beta \geq 0$. That is obvious from the fact that, if $f \in I^{\alpha}(L^1(a,b))$, then $f \in I^{\beta}I^{\alpha-\beta}(L^1(a,b)) \subset I^{\beta}(L^1(a,b))$, using $\alpha-\beta>0$ and Lemma \ref{integral}. 

\begin{rem} The identity $$\mathcal{S}_{a^+}=\bigcap_{\alpha \in E} I^{\alpha}_{a^+} (L^1(a,b))$$ holds for any set $E \subset \mathbb{R}$ with no upper bound. So, when checking that a function lies on $\mathcal{S}_{a^+}$, it is enough to see that it is the integral of order $\alpha$ of a function in $L^1(a,b)$ for ``large values'' of $\alpha$.
\end{rem}

\begin{rem} \label{derivarJ} If $f \in \mathcal{S}_{a^+}$, due to the construction of $\mathcal{S}_{a^+}$, we are always under the hypothesis of Lemmas \ref{cancel} and \ref{tecnico}. So, we can ensure that $D_{a^+}^{\alpha}$ is an endomorphism in $\mathcal{S}_{a^+}$ and that the additivity $D_{a^+}^{\alpha} \circ D_{a^+}^{\beta}=D_{a^+}^{\alpha+\beta}$ holds in $\mathcal{S}_{a^+}$ for every $\alpha,\beta \in \mathbb{R}^+ \cup \{0\}$. It is also an evident fact that fractional differentiation is bijective (and so, an isomorphism), since fractional integration is the inverse isomorphism in $\mathcal{S}_{a^+}$.
\end{rem}

\begin{rem} In particular, the functions in $\mathcal{S}_{a^+}$ admit a derivative of any natural order, so we have that $\mathcal{S}_{a^+}$ is a vector subspace of $\mathcal{C}^{\infty}(a,b)$. 

Furthermore, if $f \in \mathcal{S}_{a^+}$, it is trivial that $f(a)$ is well defined and $f(a)=0$ (in the case that $a=\pm \infty$, the evaluation should be thought as a limit). Since all the derivatives of $f$ lie in $\mathcal{S}_{a^+}$, it follows immediately that $f(a)=f'(a)=f''(a)=\dots=0$. The converse also holds, since it is well-known that, if $f(a)=f'(a)=\dots=f^{n-1)}(a)=0$ and $f$ is $n$ times differentiable, we can write $f=I_{a^+}^n f^{n)}$. Hence, if $f$ is infinitely differentiable and all its derivatives vanish at $a$, we can conclude that $f \in \mathcal{S}_{a^+}$. 

In the case that $a \in \mathbb{R}$, this implies that there are no more analytical functions in $\mathcal{S}_{a^+}$ than $f=0$. On the other hand, if $a$ is not finite this does not happen necessarily. For instance, if $f(x)=e^x$, we have $f \in \mathcal{S}_{-\infty}$.
\end{rem}

\section{Motivation}

Abel equation is one of the most studied problems that involves the fractional calculus. There exists a clear physical interpretation of it, and it can be stated in many diverse contexts. This interpretation is quite clear in \cite{shantanu} and the solution to the equation is perfectly explained in \cite{samko}. For us, Abel equation is going to be written in the form
\begin{align} \label{abel}
I_{a^+}^{\frac{1}{2}} x(t)=f(t),
\end{align}
where $f \in L^1(a,b)$ and we are looking for a solution $x \in L^1(a,b)$. The classical procedure to obtain the solution is to apply the operator $D_{a^+}^{1/2}$ to both sides of the equation. Thanks to  Lemma \ref{cancel}, there is no much more to say here because
\begin{align*}
x(t)=D_{a^+}^{\frac{1}{2}}f(t).
\end{align*}
However, we are going to show two additional methods to solve (\ref{abel}). The principal advantage of these methods is that they can be applied to more general equations. In this case, both of them use the identity $I_{a^+}^{\frac{1}{2}} \circ I_{a^+}^{\frac{1}{2}}=I_{a^+}^{{1}}$. 

The first method just deduces, from (\ref{abel}), the equation
\begin{align} \label{rabel}
I_{a^+}^{1} x(t)=I_{a^+}^{\frac{1}{2}} f(t).
\end{align}
Under reasonable hypotheses, linear fractional integral equations with constant coefficients will be proved to have at most one solution. So, it is just about computing the solution of (\ref{rabel}), which has natural integration orders because the right term is known, and checking it in (\ref{abel}), where a fractional integral was present.

The second method cares about the equation 
\begin{align*}
I_{a^+}^{1} y(t)=f(t).
\end{align*}
Clearly, if it does exist a solution $y$ to this problem, then the term $I_{a^+}^{\frac{1}{2}} y(t)$  satisfies $$I_{a^+}^{\frac{1}{2}} \Big(I_{a^+}^{\frac{1}{2}}y(t)\Big)= f(t).$$ So, in that case, $I_{a^+}^{\frac{1}{2}} y(t)$ is  the unique solution to (\ref{abel}).
\section{A suitable framework}

We will summarize our problem in an easy way. As initial objects, we are just given a set with some properties (a structure) and a way to act over the set (an operator). Specifically, assume that $V$ is a vector space over $\mathbb{C}$ and that we have a family of linear operators that will be denoted by $\mathcal{J}=\{J^{\alpha} \in \textnormal{End}(V): \alpha \in \mathbb{Q}^+ \cup \{0\}\}$. So $\mathcal{J}$ is no more than a family of operators where each operator is tagged, in a biunivocal way, as $J^{\alpha}$ for some $\alpha \in \mathbb{Q} \cup \{0\}$.

Evidently, we are not interested in any random election of $\mathcal{J}$. We want our previous labelling to be logical in some sense, so we give the following definition.

\begin{defi} \label{aditivo}
The family $\mathcal{J}$ is said to be additive if $J^{\alpha} \circ J^{\beta} = J^{\alpha + \beta}$ whenever $J^{\alpha}, J^{\beta} \in \mathcal{J}$.
\end{defi}

From now on, $\mathcal{J}$ will be assumed to be additive. The important point to be remembered is that positive rational exponents are allowed in the elements of $\mathcal{J}$. When we think about $\mathcal{J}$ as a family of operators, we are saying, essentially, that the family is formed by natural powers and roots of the operator $J:=J^1$ chosen in a coherent way.

Only with this structure many questions can be asked and, probably, one of the most natural ones is about the solutions $v \in V$ of 
\begin{align} \label{general}
(c_1 J^{\alpha_1} + \cdots + c_l J^{\alpha_l})v=w, \quad \quad 0 \leq \alpha_l < \alpha_{l-1} < \dots < \alpha_1,
\end{align}
where $w \in V$ and $c_i \in \mathbb{C}$, for all $i \in \{1,\dots,l\}$, are known. The previous equation will be called linear fractional order equation (FOE) with constant coefficients and it is evident that (\ref{abel}) and (\ref{rabel}) are particular cases of (\ref{general}). Obviously, solving this kind of equations with such a general formulation is impossible, unless more properties on the structure are required. However, we can make a non-trivial assertion about equations in the form (\ref{general}). Roughly speaking, equations like (\ref{general}) are not more complicated than the similar equations with natural exponents over $J$ (that we will call NOEs). This statement will be specially interesting in the case of linear FOIEs (fractional order integral equations) with constant coefficients, because it will mean the possibility of reducing the computation of the solutions of our FOIE to the solutions of a linear OIE with constant coefficients. In the analogous case of fractional order differential equations, a theoretical solution is possible too, but its practical applications are more limited as we will show later.

\section{The general procedure in terms of generalized polynomials}

As we said before, we want to reduce in a quick and easy way equations of the type (\ref{general}) to similar ones with natural orders. Since the $\alpha_i$, where $i \in \{1,\dots,l\}$, are all rational, we can express them with a common denominator. So, we call this common denominator $q$, and we just rewrite the expression (\ref{general}) as 
\begin{align}\label{2}
c_1J^{\frac{a_1}{q}} v+ \dots + c_l J^{\frac{a_l}{q}} v =w.
\end{align}
Now we want to find an operator in $\langle \mathcal{J} \rangle$ that converts the left side of (\ref{2}) into an analogous expression, but with natural exponents. The first question is, obviously, if this procedure is always possible. The second one should be, if it is possible, how can we construct the operator explicitly.

We will deal with both questions from an algebraic and elegant point of view by introducing the definition of generalized polynomial, which is an analogue to the ordinary case but with fractional degrees. It will be a trivial remark that the existence of the linear operator for any equation like (\ref{2}) is equivalent to the statement ``for any generalized polynomial $p$, it does always exist another generalized polynomial $\widehat{p}$ that makes $p \cdot \widehat{p}$ a polynomial''. Finally, the truthfulness of the assertion concerning generalized polynomials will be proved, ensuring that the procedure shown for solving equations of the form (\ref{2})  does always work.

\subsection{Some notions about polynomials}

\begin{defi} A generalized polynomial (in one variable) is an algebraic expression like 
\begin{align*}
p\,(X)=c_1 X^{\alpha_1} + \dots + c_l X^{\alpha_l},\textnormal{ where } \alpha_1> \dots > \alpha_l \geq 0
\end{align*}and for every $i \in \{1,\dots,l\}$ we have $c_i \in \mathbb{C}$ and $\alpha_i \in \mathbb{Q}^+ \cup \{0\}$. Of course, taking  $q$ as the l.c.m. of the denominators, the previous expression can be rewritten as 
\begin{align*}
p\,(X)=c_1 X^{\frac{a_1}{q}} + \dots + c_l X^{\frac{a_l}{q}},\textnormal{ where } \alpha_1> \dots > \alpha_l \geq 0.
\end{align*}
The set of generalized polynomials will be denoted as $\mathcal{G}$.
\end{defi}

\begin{rem} By defining the sum and product of generalized polynomials in the usual way, it can be proved that $(\mathcal{G},+,\cdot)$ is a $\mathbb{C}$-algebra.
\end{rem}

\begin{rem} It is easy to check that $\mathcal{G}$ is isomorphic, as $\mathbb{C}$-algebra, to $\mathcal{J}$.
\end{rem}

That isomorphism between $\mathbb{C}$-algebras allows us to reformulate our question in terms of generalized polynomials. Given any generalized polynomial $p$, does it always exist another generalized polynomial $\widehat{p}$ such that $p \cdot \widehat{p}$ is a polynomial?

For example, for $p\,(X)=c_1 X^2+c_2 X^{\frac{3}{2}} + c_3$, the choice $$\widehat{p}\,(X)=c_1 X^2-c_2 X^{\frac{3}{2}} + c_3$$ ensures that $(p \cdot \widehat{p}) (X)=c_1^2 X^4 - c_2^2 X^3+ 2c_3 c_1 X^2+c_3^2$ is a polynomial. The natural question that we stated before is if this construction is always possible. To solve this question, an algorithm to construct $\widehat{p}$, which depends only on finding the roots of $p$, will be given.

Our idea rests on the following well-known remark.

\begin{rem} \label{ciclo} Assume that $q \in \mathbb{Z}^+$ and $a \in \mathbb{C}\setminus \{0\}$, then the following decomposition holds $$Y^q-a^q=\prod_{j=0}^{q-1} (Y-a \xi^j)=(Y-a)(Y-a \xi)\cdots (Y-a \xi^{q-1}),$$ where $\xi$ is a primitive $q$-root of $1$. This decomposition follows immediately from the fact that both polynomials of degree $q$ have exactly the same roots and the same principal coefficient. When $a=0$, the remark holds in a trivial way.
\end{rem}

Next, we state and prove our main result.

\begin{theo} For every generalized polynomial $p \in \mathcal{G}$, it does exist another generalized polynomial $\widehat{p} \in \mathcal{G}$ such that $p \cdot \widehat{p} \in \mathbb{C}[X]$.
\end{theo}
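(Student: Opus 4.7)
The plan is to reduce the problem to ordinary polynomials via a change of variable, then exploit Remark \ref{ciclo} to complete each linear factor to a polynomial in $X$.

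First I would let $q \in \mathbb{Z}^+$ be the common denominator of the exponents appearing in $p$, so that
\begin{align*}
p\,(X)=c_1 X^{\frac{a_1}{q}} + \dots + c_l X^{\frac{a_l}{q}}, \quad a_i \in \mathbb{Z}^+ \cup \{0\}.
\end{align*}
Setting $Y=X^{1/q}$, the expression $P(Y):=c_1 Y^{a_1}+\dots+c_l Y^{a_l}$ is an honest polynomial in $\mathbb{C}[Y]$, with the bookkeeping identification that a polynomial in $Y$ with every exponent a multiple of $q$ corresponds exactly to an element of $\mathbb{C}[X]$. Thus the problem becomes: given $P(Y)\in \mathbb{C}[Y]$, find $\widehat{P}(Y)\in\mathbb{C}[Y]$ with $P(Y)\widehat{P}(Y)$ supported only on exponents divisible by $q$.

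Next I would invoke the fundamental theorem of algebra to factor
\begin{align*}
P(Y)=c \prod_{i=1}^{N}(Y-r_i), \qquad c,r_i\in\mathbb{C}.
\end{align*}
For each linear factor $(Y-r_i)$, Remark \ref{ciclo} says that if we multiply by the ``missing'' Galois companions $(Y-r_i\xi)(Y-r_i\xi^2)\cdots(Y-r_i\xi^{q-1})$, where $\xi$ is a primitive $q$-th root of unity, we obtain $Y^q-r_i^q=X-r_i^q$, which lies in $\mathbb{C}[X]$. Defining
\begin{align*}
\widehat{P}(Y):=\prod_{i=1}^{N}\prod_{j=1}^{q-1}(Y-r_i\xi^j),
\end{align*}
then gives $P(Y)\widehat{P}(Y)=c\prod_{i=1}^{N}(X-r_i^q)\in\mathbb{C}[X]$, which translates back to $p\cdot\widehat{p}\in\mathbb{C}[X]$, where $\widehat{p}$ is the generalized polynomial associated to $\widehat{P}$. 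The case $r_i=0$ causes no trouble, since Remark \ref{ciclo} handles $a=0$ trivially (the extra factors are simply $Y^{q-1}$, producing the monomial $Y^q=X$).

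Finally I would verify membership in $\mathcal{G}$: $\widehat{P}(Y)$ is an ordinary polynomial in $Y$, hence its exponents in $X$ are non-negative rationals of the form $k/q$, so $\widehat{p}\in\mathcal{G}$ as required. I do not anticipate a real obstacle: all the ingredients (FTA plus the cyclotomic identity of Remark \ref{ciclo}) are elementary, and the only point that deserves a careful sentence is the translation between polynomials in $Y$ with exponents divisible by $q$ and elements of $\mathbb{C}[X]$, which is immediate from $Y^q=X$.
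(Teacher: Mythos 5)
Your proposal is correct and follows essentially the same route as the paper's own proof: factor $p$ via the Fundamental Theorem of Algebra in the variable $X^{1/q}$ and complete each linear factor with its companions $(X^{1/q}-r_i\xi^j)$, $j=1,\dots,q-1$, using Remark \ref{ciclo} to obtain $X-r_i^q$. The explicit substitution $Y=X^{1/q}$ is only a notational variant of what the paper does.
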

\begin{proof}
We begin with $$p\,(X)=c_1 X^{\frac{a_1}{q}} + \dots + c_l X^{\frac{a_l}{q}}=c_1 \Big(X^{\frac{1}{q}}\Big)^{a_1} + \dots + c_l \Big(X^{\frac{1}{q}}\Big)^{a_l}.$$ The Fundamental Theorem of Algebra allows a decomposition like $$p\,(X)=c_1 \Big(X^{\frac{1}{q}}-r_1\Big) \cdots \Big(X^{\frac{1}{q}}-r_n\Big),$$ where we rename the degree as $n=a_1$. If we use the notation
\begin{align*}
p_i\,(X)=\Big(X^{\frac{1}{q}}-r_i\Big), \quad i \in \{1,2,\dots,n\},
\end{align*}
it is clear that a possible choice for $\widehat{p}$ is $\widehat{p}=\widehat{p_1} \cdot \widehat{p_2} \cdots \widehat{p_n}$. So, in fact, it is enough to compute a possibility for every $\widehat{p_i}$. 

The Remark \ref{ciclo} shows that, when choosing $$\widehat{p_i}\,(X)=(X^{\frac{1}{q}}-r_i \xi )(X^{\frac{1}{q}}- r_i \xi^2)\cdots(X^{\frac{1}{q}}-r_i \xi^{q-1}),$$ we have actually that $$(p_i\cdot \widehat{p_i})(X)=\Big({X^{\frac{1}{q}}}\Big)^q-r_i^q=X-r_i^q.$$ So, as we claimed before, we have that the choice $\widehat{p}=\widehat{p_1}\cdots\widehat{p_n}$ gives $$(p\cdot\widehat{p})(X)=c_1(X-r_1^q)(X-r_2^q)\cdots(X-r_n^q),$$ which is a polynomial in $X$ of degree $n$.
\end{proof}

\begin{cor}\label{rrr} If (\ref{general}) is rewritten as 
\begin{align}\label{te}
T v=w,\textnormal{ where } T \in \langle\mathcal{J}\rangle,
\end{align}
then it is possible to find $\widehat{T} \in \langle\mathcal{J}\rangle$ that produces 
\begin{align*}
\widehat{T} \circ T = T \circ \widehat{T} \in \langle\{1,J,J^2,J^3,\dots\}\rangle.
\end{align*}
\end{cor}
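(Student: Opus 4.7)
The plan is to transport the statement of the preceding theorem across the $\mathbb{C}$-algebra isomorphism between $\mathcal{G}$ and $\langle\mathcal{J}\rangle$ noted in the second remark after the definition of generalized polynomial. Write $\Phi:\mathcal{G}\to\langle\mathcal{J}\rangle$ for this isomorphism, which sends a monomial $X^{\alpha}$ to $J^{\alpha}$ and therefore any generalized polynomial $c_1X^{\alpha_1}+\dots+c_lX^{\alpha_l}$ to the operator $c_1J^{\alpha_1}+\dots+c_lJ^{\alpha_l}$. Crucially, because $\Phi$ is a morphism of $\mathbb{C}$-algebras, products in $\mathcal{G}$ correspond to compositions in $\langle\mathcal{J}\rangle$; the fact that this is well-defined at the operator level is precisely the additivity of $\mathcal{J}$ (Definition \ref{aditivo}).

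First, I would observe that $T=\Phi(p)$ for some $p\in\mathcal{G}$, since $T\in\langle\mathcal{J}\rangle$ is built out of a finite $\mathbb{C}$-linear combination of powers $J^{\alpha_i}$. By the theorem just proved, there exists $\widehat{p}\in\mathcal{G}$ such that $p\cdot\widehat{p}\in\mathbb{C}[X]$; set $\widehat{T}:=\Phi(\widehat{p})\in\langle\mathcal{J}\rangle$. Applying $\Phi$ to the identity $p\cdot\widehat{p}=\widehat{p}\cdot p$ (which holds because $\mathcal{G}$ is commutative as a $\mathbb{C}$-algebra) yields
\begin{equation*}
T\circ\widehat{T}=\Phi(p)\circ\Phi(\widehat{p})=\Phi(p\cdot\widehat{p})=\Phi(\widehat{p}\cdot p)=\widehat{T}\circ T.
\end{equation*}

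Finally, since $p\cdot\widehat{p}\in\mathbb{C}[X]$ is an ordinary polynomial, it is a finite $\mathbb{C}$-linear combination of monomials $X^k$ with $k\in\mathbb{N}\cup\{0\}$; applying $\Phi$ shows that $T\circ\widehat{T}$ is a finite $\mathbb{C}$-linear combination of the operators $J^k$, that is, $T\circ\widehat{T}\in\langle\{1,J,J^2,J^3,\dots\}\rangle$, as required. There is no real obstacle here: the only thing to be a bit careful about is to invoke additivity of $\mathcal{J}$ at the right moment, so that the isomorphism $\Phi$ actually converts formal multiplication of generalized polynomials into composition of operators; everything else is formal transport of the preceding theorem.
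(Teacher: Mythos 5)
Your argument is correct and is exactly the route the paper intends: the corollary is stated without proof precisely because it is the image of the preceding theorem under the $\mathbb{C}$-algebra morphism sending $X^{\alpha}$ to $J^{\alpha}$, whose multiplicativity is guaranteed by the additivity of $\mathcal{J}$. Note that you only need $\Phi$ to be a (surjective) homomorphism onto $\langle\mathcal{J}\rangle$, not an isomorphism, so your proof is if anything slightly more careful than the paper's implicit one.
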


From this corollary, two methods that reduce (\ref{general}) to a similar equation with natural orders are developed. The first method gives a set of possible solutions that contains the set of authentic solutions, the second one gives a set of solutions that is contained in the set of all the solutions.

\medskip

\paragraph{\bf Checking method:} We can use Corollary \ref{rrr} to deduce 
\begin{align}\label{tegorro}
(\widehat{T} \circ T) v=(T \circ \widehat{T}) v=\widehat{T} w,\textnormal{ where } T,\widehat{T} \in \langle\mathcal{J}\rangle
\end{align}
from (\ref{te}), so every solution to (\ref{te}) is a solution to (\ref{tegorro}). If we know how to solve equations in natural orders, we know how to solve (\ref{tegorro}). So, if we check the solutions of (\ref{tegorro}) in (\ref{te}), we will find all the solutions of (\ref{te}).

\medskip

\paragraph{\bf Computing method:} This method is a bit more tricky. We deal with the equation
\begin{align}\label{tegorro2}
(\widehat{T} \circ T) v=(T \circ \widehat{T}) v=w,\textnormal{ where } T,\widehat{T} \in \langle\mathcal{J}\rangle.
\end{align}
If we know how to solve equations with integer orders, we know how to solve (\ref{tegorro2}). If $u \in V$ is a solution to (\ref{tegorro2}) it is trivial that $\widehat{T}(u)$ solves (\ref{te}). The fundamental problem of this method is that it does not ensure that every solution is obtained. The advantage is that in some cases, as for instance when dealing with fractional integrals, the right side of the equation can be more treatable in (\ref{tegorro2}) than in (\ref{tegorro}). In the case of fractional integrals, (\ref{te}) will be proved to have at most one solution so, if we find it with the ``computing method'', the procedure will be over.

\subsection{Minimality of the construction}

It has been proved the existence of the linear operator that turns a linear FOE with constant coefficients into a similar one of natural orders. If we think about the ``checking method'', every solution of the fractional equation is a solution of the integral one, but the converse is not necessarily true. So, looking for the solutions of the FOE is just about checking the solutions of the NOE in the expression of the FOE. However, with the idea of minimizing the amount of checks, it would be interesting to guarantee what is the least possible value for the degree of the equation $n$.

If we go back to the previous theorem, we can see that our construction seems pretty reasonable and it would not be a surprise if it is the minimal one, in the previous sense. In fact, we will show that it is the minimal construction when it does not exist a pair of distinct roots $r_i, r_j \in \{r_1,...,r_n\}$ and a $q$-root of unity $\xi$ such that $r_i=\xi r_j$. When this happens, it is obvious that there is no need of working separately with $r_i$ and $r_j$, since $p_j$ is one of the factors on $\widehat{p_i}$ and vice versa.

We will continue using the notation $$p\,(X)=c_1 X^{\frac{a_1}{q}} + \dots + c_l X^{\frac{a_l}{q}},$$ where $q>0$ and $a_1 > a_2 > \dots > a_l \geq 0$.

\begin{lem} The common denominator of the exponents in $\widehat{p}$ is the same as the one in $p$.
\end{lem}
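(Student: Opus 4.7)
The plan is to argue by contradiction. By construction every exponent appearing in $\widehat{p}$ is of the form $j/q$, so the common denominator of $\widehat{p}$ already divides $q$; the content of the lemma is therefore that it cannot be a proper divisor. Assume on the contrary that some $d\mid q$ with $d<q$ is a common denominator for $\widehat{p}$, and set $m:=q/d>1$. The goal is to derive that the common denominator of $p$ divides $d$ as well, contradicting the hypothesis.

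The first step is the change of variable $Y:=X^{1/q}$, after which $p$ and $\widehat{p}$ become ordinary polynomials in $Y$. The contradiction hypothesis translates into $\widehat{p}\in\mathbb{C}[Y^{m}]$, while the previous theorem provides $p\cdot\widehat{p}\in\mathbb{C}[X]=\mathbb{C}[Y^{q}]\subseteq\mathbb{C}[Y^{m}]$ (the inclusion holds because $m\mid q$).

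The key step is to exploit an action of the $m$-th roots of unity on $\mathbb{C}[Y]$. Let $\zeta$ be a primitive $m$-th root of unity and consider the $\mathbb{C}$-algebra automorphism sending $Y\mapsto\zeta Y$. Both $\widehat{p}$ and $p\cdot\widehat{p}$ are fixed by this substitution, so $p(\zeta Y)\,\widehat{p}(\zeta Y)=p(Y)\,\widehat{p}(Y)$ with $\widehat{p}(\zeta Y)=\widehat{p}(Y)$. Since $\mathbb{C}[Y]$ is an integral domain and $\widehat{p}\not\equiv 0$ (immediate from the explicit factorisation $\widehat{p}=\widehat{p_{1}}\cdots\widehat{p_{n}}$ produced in the proof of the theorem), we may cancel $\widehat{p}$ to deduce $p(\zeta Y)=p(Y)$. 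Matching coefficients gives $a_{j}(\zeta^{j}-1)=0$ for every $j$, so $p\in\mathbb{C}[Y^{m}]$; translating back to $X$, every exponent of $p$ lies in $\tfrac{m}{q}\mathbb{Z}=\tfrac{1}{d}\mathbb{Z}$, contradicting the assumption that the common denominator of $p$ is exactly $q$.

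The whole argument is essentially elementary; the only step that might be considered the main obstacle is realising that the right test to apply to $\widehat{p}$ is the Galois-type substitution $Y\mapsto\zeta Y$, since this is precisely what, via integral-domain cancellation, transfers information about the denominator of $\widehat{p}$ back onto $p$.
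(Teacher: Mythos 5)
Your argument proves the wrong half of the lemma and assumes the half that actually needs proof. At this point in the paper, $\widehat{p}$ is an \emph{arbitrary} generalized polynomial with $p\cdot\widehat{p}\in\mathbb{C}[X]$ --- the whole minimality subsection quantifies over ``any valid choice of $\widehat{p}$'' --- so its exponents are a priori arbitrary non-negative rationals and nothing about them holds ``by construction''. The content of the lemma, as it is used immediately afterwards to justify the notation $\widehat{p}\,(X)=d_1X^{a_1'/q}+\dots+d_mX^{a_m'/q}$, is precisely that every exponent of such a $\widehat{p}$ must be an integer multiple of $1/q$. The paper proves exactly this, by a leading-term argument: if some exponents of $\widehat{p}$ fail to be multiples of $1/q$ and $cX^{r}$ is the summand of largest degree among those, then $p\cdot\widehat{p}$ contains the term $cc_1X^{r+a_1/q}$, whose exponent is not an integer and which cannot be cancelled by any other product term, contradicting $p\cdot\widehat{p}\in\mathbb{C}[X]$. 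Your opening sentence silently restricts attention to the particular $\widehat{p}$ built in the theorem, for which this direction is indeed trivial; but then the lemma would say nothing about other choices of $\widehat{p}$, and the subsequent optimality analysis (which compares all valid choices) would not go through.

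What you do prove --- that the common denominator of $\widehat{p}$ cannot be a \emph{proper} divisor $d$ of $q$ --- is correct as far as it goes (granting that $q$ is the least common denominator of $p$), and it is a statement the paper never establishes nor needs, since only the divisibility direction is used later. The good news is that your root-of-unity and integral-domain-cancellation technique can be turned around to supply the missing direction: set $Z=X^{1/Q}$ with $Q$ the l.c.m.\ of $q$ and the denominators of the exponents of $\widehat{p}$, observe that $p\in\mathbb{C}[Z^{Q/q}]$ and $p\cdot\widehat{p}\in\mathbb{C}[Z^{Q}]\subseteq\mathbb{C}[Z^{Q/q}]$, substitute $Z\mapsto\eta Z$ for $\eta$ a primitive $(Q/q)$-th root of unity, and cancel $p\neq 0$ to conclude $\widehat{p}\in\mathbb{C}[Z^{Q/q}]$, i.e.\ that all exponents of $\widehat{p}$ lie in $\tfrac{1}{q}\mathbb{Z}$. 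With that symmetric application added, your argument would prove a statement strictly stronger than the paper's; as written, it has a genuine gap at the step the lemma exists to cover.
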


\begin{proof} The proof is trivial, since the other case is an absurd. Without lost of generality, imagine that there are some summands in $\widehat{p}$ whose exponents are not integer multiples of $\frac{1}{q}$. Among these summands, the one with the biggest degree is chosen, let's call it $cX^r$. It is clear that in $(p \cdot \widehat{p})(X)$ there is going to be a summand $c c_1 X^{r+\frac{a_1}{q}}$ that can not be corrected with any other, contradicting that $p \cdot \widehat{p}$ is a polynomial.
\end{proof}

So, because of the previous lemma we can use, from now on, the notation $$\widehat{p}\,(X)=d_1 X^{\frac{a'_1}{q}} + \dots + d_m X^{\frac{a'_m}{q}},$$ where $q>0$ and $a'_1 > a'_2 > \dots > a'_m$.

With the purpose of enlightening the notation, the substitution $Y=X^{\frac{1}{q}}$ will be made. It is well-known that $\mathbb{C}[Y]$ is a principal ideal domain (PID) and, so, a factorization unique domain (FUD). It is obvious that $\mathbb{C}[Y^t]$ is also a FUD, for every $t \in \mathbb{Z}^+$, just because of the change of variable $Z=Y^t$. 

\begin{defi} Given a polynomial $f \in \mathbb{C}[Y]$ and a complex number $y_0 \in \mathbb{C}$, we denote by $\textnormal{ord}_{f}(y_0)$ the maximum natural number such that $(Y-y_0)^{\textnormal{ord}_{f}(y_0)}$ divides $f$.
\end{defi}
\begin{defi} Given a polynomial $f \in \mathbb{C}[Y]$, we denote the set of roots of $f$ as $$R_f=\{y_0 \in \mathbb{C}:f(y_0)=0\}$$ and the set of the $q$-powers of the roots as $$R_f^q=\{y_0^q \in \mathbb{C}:f(y_0)=0\}.$$
\end{defi}
\begin{rem} It should be noted that $\#R_f^q \leq \#R_f$.
\end{rem}

\begin{lem} If $y_0$ is a root of $p\,(X^q)$ and $\xi$ is a primitive $q$-root of unity, then for any choice of $\widehat{p}$ we have that $y_0 \xi^j$ is a root of $p \cdot \widehat{p}$ for every $j \in \{0,\dots,q-1\}$. Furthermore, for any valid choice of $\widehat{p}$ we will always have that $$\displaystyle \prod_{y_0 \in R_f^q} (Y-y_0)^{m_{y_0}}$$ divides $(p \cdot \widehat{p})(X^q)$, where $\displaystyle m_{y_0}=\max_{0 \leq j \leq q-1}\{ord_f(y_0 \xi^j)\}$.
\end{lem}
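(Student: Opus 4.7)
The central observation is that, once we introduce the substitution $Y = X^{1/q}$ already suggested and set $f(Y) = p(Y^q)$, $\widehat{f}(Y) = \widehat{p}(Y^q)$, the hypothesis that $p \cdot \widehat{p}$ is an ordinary polynomial in $X$ says exactly that $f(Y) \cdot \widehat{f}(Y) = (p \cdot \widehat{p})(Y^q)$ lies in the subring $\mathbb{C}[Y^q] \subset \mathbb{C}[Y]$, equivalently that it is invariant under the substitution $Y \mapsto \xi Y$. This one remark is what drives both assertions.

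For the first claim, I would simply observe that if $f(y_0) = 0$, then $(f \widehat{f})(y_0) = 0$, and by the $\xi$-invariance above, $(f \widehat{f})(y_0 \xi^j) = 0$ for every $j \in \{0, \dots, q-1\}$. Translating back through $X = Y^q$, this is precisely the statement that $y_0 \xi^j$ is a root of $(p \cdot \widehat{p})(X^q)$.

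For the divisibility statement, the plan is to factor $p \cdot \widehat{p} \in \mathbb{C}[X]$ as $c \prod_z (X - z)^{n_z}$ (with $z$ ranging over the distinct roots) and then use Remark \ref{ciclo} to split each $Y^q - z$ as $\prod_{j=0}^{q-1}(Y - \gamma_z \xi^j)$, where $\gamma_z$ denotes any chosen $q$-th root of $z$. This yields
$$
f(Y) \cdot \widehat{f}(Y) \, = \, c \prod_z \prod_{j=0}^{q-1}(Y - \gamma_z \xi^j)^{n_z}.
$$
Comparing multiplicities at the root $\gamma_z \xi^j$ on the two sides of the equality $f \widehat{f} = c \prod_z \prod_j (Y - \gamma_z \xi^j)^{n_z}$ gives the identity $\textnormal{ord}_f(\gamma_z \xi^j) + \textnormal{ord}_{\widehat{f}}(\gamma_z \xi^j) = n_z$; dropping the non-negative $\widehat{f}$-contribution gives $n_z \geq \textnormal{ord}_f(\gamma_z \xi^j)$ for every $j$. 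Maximizing over $j$ produces $n_z \geq m_z$ for each $z \in R_f^q$, and since the factors associated to distinct elements of $R_f^q$ are pairwise coprime, combining the per-$z$ estimates yields the required divisibility.

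The main obstacle is purely notational bookkeeping between the two worlds related by $X = Y^q$: the total multiplicity $n_z$ of a root $z$ of $p\widehat{p}$ absorbs the sum $\textnormal{ord}_f(\beta) + \textnormal{ord}_{\widehat{f}}(\beta)$ at every $q$-th root $\beta$ of $z$, and it is this identity that forces the inequality $n_z \geq m_z$. Beyond keeping this translation straight, nothing more than unique factorization in $\mathbb{C}[Y]$ and the first part of the lemma is needed.
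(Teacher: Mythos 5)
Your argument is correct and follows essentially the same route as the paper: both proofs factor $(p\cdot\widehat{p})(X)$ over $\mathbb{C}$, pull that factorization back to the variable $Y=X^{1/q}$ via Remark \ref{ciclo}, and compare multiplicities along each orbit $\{y_0\xi^j\}$ — your identity $\textnormal{ord}_f(\gamma_z\xi^j)+\textnormal{ord}_{\widehat{f}}(\gamma_z\xi^j)=n_z$ is just a cleaner packaging of the paper's count of how many factors $Y^q-\beta_i$ the linear factor $Y-y_0$ must divide. The one caveat, which the paper's own proof shares, is the orbit of $z=0$: there the $q$ points $\gamma_z\xi^j$ all coincide, the right-hand multiplicity is $qn_0$ rather than $n_0$, and the inequality weakens to $qn_0\ge\textnormal{ord}_f(0)$, which still gives the claimed divisibility when the product is read in the variable $Y$ (the reading consistent with the lemma) but not the stronger $n_0\ge m_0$ in the variable $X$.
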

\begin{proof} We have $$(p \cdot \widehat{p})(X^q)=b_1 X^{s} + \dots + b_s X +b_{s+1}=b_1 Y^{s\cdot q} + \dots + b_s Y^{q} +b_{s+1}.$$ As we mentioned before, there is a unique factorization $$(p \cdot \widehat{p})(X^q)=b_1(Y^q-\beta_1)\cdots(Y^q-\beta_s).$$ So, if $y_0$ is a root of $p\,(X^q)$ with multiplicity $k$, $Y-y_0$ divides at least $k$ different factors $Y^q-\beta_{i}$ and $y_0 \xi^j$ is going to be necessarily a root of multiplicity $k$ of $(p \cdot \widehat{p})(X^q)$ for every $j \in \{0,1,\dots,q-1\}$. Obviously, changing the role of $y_0$ for each $y_0\xi^j$ implies that the minimum possible amount of factors is $m_{y_0}$. In particular, when $Y-y_0$ appears in $p\,(X^q)$ with multiplicity $k$, the same factor appears in $\widehat{p}\,(X^q)$ with multiplicity $m_{y_0}-k$.
\end{proof}

\begin{ex}
Now, we illustrate how does the procedure work, already using $X^{\frac{1}{q}}$ instead of $Y$. For example, consider the generalized polynomial $$p\,(X)=(X^{\frac{1}{4}}+2)^2(X^{\frac{1}{4}}-2)(X^{\frac{1}{4}}+3).$$ Our optimal proposal is $$\widehat{p}\,(X)=(X^{\frac{1}{4}}-2)(X^{\frac{1}{4}}+2i)^2(X^{\frac{1}{4}}-2i)^2(X^{\frac{1}{4}}-3)(X^{\frac{1}{4}}+3i)(X^{\frac{1}{4}}-3i),$$ that makes $$(p \cdot \widehat{p})(X)=\prod_{j=1}^4 \Big((X^{\frac{1}{4}}-2 i^j)^2 (X^{\frac{1}{4}}-3 i^j)\Big)=(X-16)^2(X-81).$$
\end{ex}

\section{Fractional order integral equations}

In this section, we will be concerned about linear FOIEs with constant coefficients of the type
\begin{align}\label{1}
c_1I_{a^+}^{r_1} x(t)+ \dots + c_nI_{a^+}^{r_n} x(t) =f(t),
\end{align} 
where $a,b \in \mathbb{R}$, $f \in L^1(a,b)$, $r_i \in \mathbb{Q}^+ \cup \{0\}$ and $c_i \in \mathbb{C}$ for every $i \in \{1,\dots,n\}.$ The general solution to this equation, even when $r_i \in \mathbb{R}$, is known and can be found in \cite[page 846]{samko}. However, the procedure used to find the solution is quite technical and relies on some results of \cite{vladimirov}, which is written in Russian.

The difficulty for solving (\ref{1}) has motivated some authors to find a technique that allows an easier solution, even if it is only valid in a less general case. One of the most known simplifications to (\ref{1}) is to assume that the $r_i$ are non-negative rational numbers. That case has been treated, for example, in \cite{shantanu}. So, from now on, we will also work with the hypothesis $r_i \in \mathbb{Q}^+ \cup \{0\}$ for all $i \in \{1,\dots,n\}$.

We can return to the previous syntax, that appears in (\ref{general}), by taking $J^{\alpha}=I_{a^+}^{\alpha}$, $V=L^1(a,b)$, $v=x(t)$ and $w=f(t)$. The property exposed on Lemma \ref{integral} ensures that we are under the hypothesis of additivity, given in Definition \ref{aditivo}.

\subsection{Uniqueness of solution to linear integral equations with constant coefficients}

It is widely known that the solution to a linear ODE (of order $n$) with constant coefficients is an affine space of dimension $n$. In the case of an integral equation, we have no such result. We are going to include a proof of some lemmas that supply that result. What is going to be proved is that any homogeneous linear OIE with constant coefficients has only the trivial solution.

\begin{lem} If $f \in \mathcal{C}^{\infty}(a,b)$, we have that if the problem 
\begin{align} \label{integralentera}
c_0 I_{a^+}^{n} x(t)+ \dots + c_{n-1} I_{a^+}^{1} x(t) + c_{n} x(t) =f(t), \textnormal{ where } c_0 \neq 0,
\end{align}
has solutions in $L^1(a,b)$, then the solutions are in $\mathcal{C}^{s}(a,b)$ and they are even analytical if $f$ is analytical too.
\end{lem}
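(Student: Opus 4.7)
The strategy is a bootstrap on the regularity of $x$, preceded by a reduction to the nondegenerate case. First I would treat the case $c_n\ne 0$. Solving the equation for $x$ gives
\begin{equation*}
x(t) \;=\; \frac{1}{c_n}\Bigl(f(t)\;-\;\sum_{i=0}^{n-1} c_i\, I_{a^+}^{n-i} x(t)\Bigr).
\end{equation*}
For every $i\le n-1$ the integer order $n-i$ is at least $1$, so $I_{a^+}^{n-i}$ sends $L^1(a,b)$ into $\mathcal{C}(a,b)$ (an $L^1$\nobreakdash-antiderivative is absolutely continuous, and further integrations only improve regularity). Hence if $x\in L^1(a,b)$ solves the equation, the right-hand side is continuous, which forces $x\in\mathcal{C}(a,b)$. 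Iterating: if $x\in\mathcal{C}^s(a,b)$, then each $I_{a^+}^{n-i}x$ lies in $\mathcal{C}^{s+n-i}(a,b)\subset\mathcal{C}^{s+1}(a,b)$, and since $f\in\mathcal{C}^\infty(a,b)$ the whole right-hand side is $\mathcal{C}^{s+1}$, so $x\in\mathcal{C}^{s+1}(a,b)$. Induction yields $x\in\mathcal{C}^\infty(a,b)$.

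For the degenerate case $c_n=0$, let $k<n$ be the largest index with $c_k\ne 0$. Factor $I_{a^+}^{n-k}$ out of the equation to rewrite it as
\begin{equation*}
I_{a^+}^{n-k}\bigl(c_0 I_{a^+}^{k}x + c_1 I_{a^+}^{k-1}x + \dots + c_k x\bigr)\;=\;f(t).
\end{equation*}
The existence of an $L^1$ solution then already forces $f(a)=f'(a)=\dots=f^{(n-k-1)}(a)=0$, and applying $D_{a^+}^{n-k}$ to both sides (which on $L^1$ reduces to the classical Lebesgue differentiation $\tfrac{d}{dt}\int_a^t g = g$ a.e., iterated) cancels the integral and produces a shorter equation $c_0 I_{a^+}^{k}x+\dots+c_k x = f^{(n-k)}$ of the same shape, with trailing coefficient $c_k\ne 0$ and right-hand side still in $\mathcal{C}^\infty(a,b)$ (indeed analytic if $f$ is). The first paragraph then applies and delivers $x\in\mathcal{C}^\infty(a,b)$.

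To upgrade $\mathcal{C}^\infty$ to analyticity, after reducing to $c_n\ne 0$ I would apply $D_{a^+}^n$ to both sides of the original equation. Because $x\in\mathcal{C}^\infty(a,b)$, the identity $D_{a^+}^n I_{a^+}^{n-i} x = x^{(i)}$ holds for every $i\le n$ (a classical calculus computation once the integrations have been absorbed), so the equation becomes the linear ODE with constant coefficients
\begin{equation*}
c_n x^{(n)}(t) + c_{n-1} x^{(n-1)}(t) + \dots + c_1 x'(t) + c_0 x(t)\;=\;f^{(n)}(t).
\end{equation*}
Evaluating the original equation and its first $n-1$ derivatives at $t=a$ (all fractional integrals from $a^+$ vanish at $a$) yields a lower-triangular linear system for $x(a),x'(a),\dots,x^{(n-1)}(a)$ with diagonal entries $c_n\ne 0$, hence determining the initial data in terms of $f(a),\dots,f^{(n-1)}(a)$. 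Since a linear ODE with constant coefficients, analytic source, and analytic (in fact constant) initial data has an analytic solution, $x$ is analytic on $(a,b)$.

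The main obstacle, as I see it, is the careful justification of the cancellation $D_{a^+}^{n-k}I_{a^+}^{n-k}g=g$ used in the reduction step: Lemma~\ref{cancel} is stated only for $\alpha>\beta$, so one has to invoke directly the Lebesgue differentiation theorem for $L^1$ functions rather than quote the lemma verbatim. Everything else is routine bootstrapping and the standard theory of linear ODEs.
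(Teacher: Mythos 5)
Your proof is correct and follows essentially the same route as the paper: a bootstrap showing each $I_{a^+}^{n-i}$ with $n-i\geq 1$ gains one order of smoothness, followed by differentiating $n$ times to reach the constant-coefficient ODE $c_0 x + \dots + c_n x^{(n)} = f^{(n)}$ and quoting analyticity of its solutions. The only cosmetic difference is that you handle the degenerate case $c_n=0$ by applying $D_{a^+}^{n-k}$ to reduce to a shorter equation, whereas the paper simply isolates the term $-c_s I_{a^+}^{n-s}x(t)$ on one side and runs the same minimal-counterexample bootstrap.
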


\begin{proof} If $s \in \{0,1,\dots n\}$ is the largest possible number such that $c_s \neq 0$, from (\ref{integralentera}), we deduce that 
\begin{align*}
\sum_{j=0}^{s-1} c_j I_{a^+}^{n-j} x(t)-f(t)=-c_s I_{a^+}^{n-s} x(t).
\end{align*}
If we assume that $f \in \mathcal{C}^{\infty}(a,b)$ and $x \not \in \mathcal{C}^{\infty}(a,b)$, we can find the smallest natural number $r$ such that $f \not \in C^r(a,b)$ (we can have $r=0$ if $x$ is not continuous, but anycase we know that lies in $L^1(a,b)$). However, because of the construction of $r$, the left member is clearly a sum of functions that lie in $C^r(a,b)$, giving a contradiction.

If $f$ is analytical, we have $f \in \mathcal{C}^{\infty}(a,b)$ and it has been already shown that $x \in \mathcal{C}^{\infty}(a,b)$. So, it is possible to differentiate $n$ times at both sides in (\ref{integralentera}) leading to the expression 
\begin{align} \label{deducidaD}
c_0 x(t)+ \dots + c_{n-1} D^{n-1} x(t) + c_{n} D^n x(t) =D^n f(t).
\end{align}
The last expression is known to have an $n$ dimensional space of analytical solutions so, since all the solutions to (\ref{integralentera}) are solutions to (\ref{deducidaD}), all the solutions to (\ref{integralentera}) will be analytical.
\end{proof}

\begin{lem} The problem (\ref{integralentera}) with $f=0$ and $a \in \mathbb{R}$ has only the trivial solution.
\end{lem}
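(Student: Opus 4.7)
The plan is to combine the regularity just established with classical ODE uniqueness, extracting the required initial conditions at $a$ directly from the integral equation itself.

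Since $f = 0$ is analytic, the preceding lemma shows that any $L^1(a,b)$ solution $x$ is (the restriction of) an entire analytic function, so in particular all derivatives $x^{(k)}(a)$ are well-defined. Let $s \in \{0,1,\dots,n\}$ be the largest index for which $c_s \neq 0$; such an $s$ exists because $c_0 \neq 0$. Applying $D_{a^+}^{n-s}$ to the equation (legitimate by Lemma~\ref{cancel}) collapses it, since all higher-indexed coefficients vanish, to
$$
c_0\, I_{a^+}^{s} x(t) + c_1\, I_{a^+}^{s-1} x(t) + \dots + c_{s-1}\, I_{a^+}^{1} x(t) + c_s\, x(t) = 0,
$$
where now $c_s \neq 0$ is the coefficient of the unintegrated term.

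Next I would extract $s$ vanishing initial values at $a$. Differentiating this reduced equation $k$ times for $k = 0, 1, \dots, s-1$ and evaluating at $t = a$, each surviving term $(I_{a^+}^{m} x)(t)$ with $m \geq 1$ tends to $0$ as $t \to a^+$ (by continuity of $x$ at $a$), leaving the lower triangular system
$$
\sum_{i=0}^{k} c_{s-k+i}\, x^{(i)}(a) = 0, \qquad k = 0, 1, \dots, s-1,
$$
whose diagonal entries are all equal to $c_s \neq 0$. Solving from the top row downward forces $x(a) = x'(a) = \dots = x^{(s-1)}(a) = 0$.

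Finally, applying $D^{s}$ to the reduced equation produces the linear constant-coefficient ODE
$$
c_s\, x^{(s)}(t) + c_{s-1}\, x^{(s-1)}(t) + \dots + c_1\, x'(t) + c_0\, x(t) = 0
$$
of genuine order $s$; combined with the $s$ vanishing initial conditions at the finite point $a \in \mathbb{R}$, uniqueness for linear ODEs with constant coefficients gives $x \equiv 0$ on $(a,b)$. The degenerate case $s = 0$ needs no initial data, the reduced equation being simply $c_0 x = 0$. The only delicate point is the bookkeeping of indices under iterated differentiation and verifying that Lemmas~\ref{cancel} and~\ref{tecnico} apply at each step; the analyticity granted by the preceding lemma is precisely what legitimizes this, and also what justifies the pointwise evaluations at $a$.
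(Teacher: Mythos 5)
Your proposal is correct and follows essentially the same route as the paper: invoke the preceding regularity lemma (with $f=0$ analytic), differentiate the integral equation repeatedly and evaluate at the finite endpoint $a$ to extract the vanishing initial data $x(a)=x'(a)=\dots=x^{(s-1)}(a)=0$, and conclude by uniqueness for the order-$s$ constant-coefficient ODE. Your explicit lower-triangular system with diagonal entries $c_s\neq 0$ is just a more detailed bookkeeping of the paper's ``differentiate $n-s, n-s+1,\dots,n-1$ times'' argument.
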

\begin{proof} 
Because of the previous development we already know that $x$ is  a solution to
\begin{align*}
c_0 x(t)+ \dots + c_{n-1} D^{n-1} x(t) + c_{n} D^n x(t) =0.
\end{align*}
Hence $x$ lies, a priori, in a vector space of dimension $s$, where $s$ is the maximum value $s \in \{0,1,\dots,n\}$ such that $c_s \neq 0$. By differentiating the equation $n-s$ times and substituting $t=a$, we get the condition $x(a)=0$. From that information, differentiating the equation $n-s+1$ times and substituting $t=a$, we deduce  the condition $x'(a)=0$. This procedure can be done until differentiating $n-1$ times and the conditions obtained are $x(a)=x'(a)=\dots=x^{s-1)}(a)=0$, that in fact show that the unique possible solution is $x=0$.
\end{proof}
\begin{ex} The procedure is not valid when $a$ is not finite. For instance, we have that the equation $$D^1 x(t)=x(t)$$ has, as solutions, $x(t)=k e^t$, with $k \in \mathbb{R}$. It is obvious that, if $a=-\infty$, all the solutions to the equation fullfil the condition $x(a)=0$. Furthermore, the integral equation $$I_{-\infty}^1 x(t)=x(t)$$ has more than one solution. In fact, $x(t)=k e^t$ solves the last integral equation for every $k \in \mathbb{R}$.
\end{ex}
\begin{cor} If $f \in L^1(a,b)$, then the problem (\ref{integralentera}) with $a \in \mathbb{R}$ has at most one solution.
\end{cor}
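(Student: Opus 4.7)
The plan is to reduce the uniqueness claim to the homogeneous case already handled by the preceding lemma, using only the linearity of the operators $I_{a^+}^{k}$ for $k \in \{0,1,\dots,n\}$.

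First, I would suppose that $x_1, x_2 \in L^1(a,b)$ are both solutions of (\ref{integralentera}) with the same right-hand side $f \in L^1(a,b)$ and constant coefficients $c_0,\dots,c_n$ (with $c_0 \neq 0$). Setting $y := x_1 - x_2 \in L^1(a,b)$ and subtracting the two equations, the linearity of each operator $I_{a^+}^{k}$ and the cancellation of $f$ on the right-hand side yield
\begin{equation*}
c_0 I_{a^+}^{n} y(t) + \cdots + c_{n-1} I_{a^+}^{1} y(t) + c_n\, y(t) = 0.
\end{equation*}
Hence $y$ is an $L^1(a,b)$ solution of the homogeneous version of (\ref{integralentera}) with $f \equiv 0$, which is precisely the setting of the preceding lemma.

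Next, I would invoke that lemma directly: since $a \in \mathbb{R}$, the only $L^1(a,b)$ solution of the homogeneous equation is $y \equiv 0$. Therefore $x_1 = x_2$ almost everywhere in $(a,b)$, which is the desired uniqueness.

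There is no real obstacle here, as the statement is a formal corollary: the work has already been absorbed into the two lemmas that guarantee (i) smoothness/analyticity of $L^1$ solutions when $f$ is smooth/analytic (applied to $f \equiv 0$, which is analytic), and (ii) the vanishing of $y$ and its first $s-1$ derivatives at $a \in \mathbb{R}$, which forces $y=0$ in the finite-dimensional solution space of the associated ODE. The only point worth flagging is the essential use of $a \in \mathbb{R}$: the example right before the corollary shows that the argument collapses when $a = -\infty$ because the boundary evaluations $y(a) = y'(a) = \cdots = 0$ can no longer be read off, so the corollary would genuinely fail without this hypothesis.
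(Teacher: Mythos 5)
Your proof is correct and follows exactly the paper's argument: take the difference of two putative solutions, observe by linearity that it solves the homogeneous equation, and invoke the preceding lemma (valid for $a \in \mathbb{R}$) to conclude the difference vanishes. The paper states this in one sentence as a proof by contradiction; your version merely spells out the same reduction in more detail.
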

\begin{proof} The proof is trivial, since the existence of two different solutions would imply that their difference is a non trivial solution to (\ref{integralentera}) with $f=0$, contradicting the previous lemma.
\end{proof}
This result extends in an obvious way to a FOIE like (\ref{1}), because the ``checking method'' ensures that the set of solutions  to (\ref{1})  in $L^1(a,b)$ is  contained in the set of solutions to some equation of the type (\ref{integralentera}), that is, at most, an element.
\begin{cor} Any linear rational order integral equation with constant coefficients of the type (\ref{1}), where $a \in \mathbb{R}$, has at most one solution in $L^1(a,b)$.
\end{cor}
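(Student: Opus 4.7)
My plan is to reduce the uniqueness question for the fractional equation (\ref{1}) to the uniqueness result already established for the integer-order equation (\ref{integralentera}), via the ``checking method'' built on Corollary \ref{rrr}.

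First, I would recast (\ref{1}) in the abstract language of Section 3 by taking $V=L^1(a,b)$, $J^\alpha=I_{a^+}^\alpha$, $v=x$, $w=f$, so that (\ref{1}) becomes $Tv=w$ for a suitable $T\in\langle\mathcal{J}\rangle$. By Corollary \ref{rrr} there exists $\widehat{T}\in\langle\mathcal{J}\rangle$ such that $\widehat{T}\circ T=T\circ\widehat{T}$ is a $\mathbb{C}$-linear combination of $\{1,J,J^2,\dots,J^n\}$ for some $n\in\mathbb{N}$. Applying $\widehat{T}$ to both sides of $Tv=w$ yields
\begin{align*}
c_0 I_{a^+}^{n} x(t)+\dots+c_{n-1}I_{a^+}^{1}x(t)+c_n x(t)=\widehat{T}w(t),
\end{align*}
which is precisely an equation of the form (\ref{integralentera}).

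Next I would check two points needed to invoke the previous corollary. The right-hand side $\widehat{T}w$ lies in $L^1(a,b)$: this is immediate from Remark \ref{obs} since $\widehat{T}$ is a finite $\mathbb{C}$-linear combination of operators $I_{a^+}^{\alpha}$ with $\alpha\ge 0$, each of which maps $L^1(a,b)$ into itself. For the leading coefficient, I would use the isomorphism between $\mathcal{J}$ and the algebra $\mathcal{G}$ of generalized polynomials: $T$ and $\widehat{T}$ correspond to nonzero generalized polynomials (both are nontrivial by construction), so their product is a nonzero ordinary polynomial whose degree $n$ is well defined and whose coefficient $c_0$ at $X^n$ is nonzero.

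Finally, if $x_1,x_2\in L^1(a,b)$ were two solutions of (\ref{1}), then $x_1-x_2$ would be a solution of the homogeneous version of the derived integer-order equation (with right-hand side $\widehat{T}0=0$). Since $a\in\mathbb{R}$ and $c_0\ne 0$, the previous lemma forces $x_1-x_2=0$. Equivalently, by the previous corollary, the derived equation has at most one solution in $L^1(a,b)$, and the solution set of (\ref{1}) is contained in it, so (\ref{1}) has at most one $L^1$ solution. The only delicate point is justifying that $c_0\ne 0$ and $\widehat{T}w\in L^1(a,b)$; everything else is a direct application of the machinery already built in Sections 3--4 and the preceding lemmas of Section 5.
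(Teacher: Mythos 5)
Your proposal is correct and follows essentially the same route as the paper: the paper's own proof is a one-line observation that the ``checking method'' (applying $\widehat{T}$ from Corollary \ref{rrr}) embeds the solution set of (\ref{1}) into that of an equation of type (\ref{integralentera}), which was already shown to have at most one solution when $a\in\mathbb{R}$. Your additional verifications that $\widehat{T}w\in L^1(a,b)$ and that the leading coefficient of the resulting integer-order equation is nonzero are details the paper leaves implicit, and they are handled correctly.
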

\begin{ex} Sometimes the set of solutions is empty as we can see with the equation $I_0^{1}x(t)=1$, which will lead to incoherences when $t=0$. 
\end{ex}
\subsection{An example}
\begin{ex}
We show, as an example, how to solve the equation $$T x(t)=I_{0}^1 x(t) + 5 I_{0}^{3/4} x(t) + 2 I_{0}^{1/2}  x(t)  -20 I_{0}^{1/4} x(t) -24 x(t) = e^t.$$
The associated generalized polynomial is $$p\,(X)=(X^{\frac{1}{4}}+2)^2(X^{\frac{1}{4}}-2)(X^{\frac{1}{4}}+3).$$ Our optimal proposal is $$\widehat{p}\,(X)=(X^{\frac{1}{4}}-2)(X^{\frac{1}{4}}+2i)^2(X^{\frac{1}{4}}-2i)^2(X^{\frac{1}{4}}-3)(X^{\frac{1}{4}}+3i)(X^{\frac{1}{4}}-3i),$$ which is associated to the integral operator 
\begin{equation}\label{tebarra}
\begin{split}
\widehat{T}=& I_{0}^2 - 5 I_{0}^{7/4} + 23 I_{0}^{3/2} - 85 I_{0}^{5/4} + 190 I_{0}^{1} - 440 I_{0}^{3/4}\\
&  + 672 I_{0}^{1/2} - 720 I_{0}^{1/4} + 864 \textnormal{ id}.
\end{split}
\end{equation}
That produces $$(p \cdot \widehat{p})(X)=(X-16)^2(X-81)=X^3- 113 X^2 + 2848 X - 20736.$$

The computing method deals with the equation $$(T \circ \widehat{T}) y(t)=I_{0^+}^3 y(t)- 113 I_{0^+}^2 y(t) + 2848 I_{0^+}y(t) - 20736 y(t)=e^t.$$ We have already shown that the solutions to the previous equation are analytical. So, if we differentiate the equation three times, we can arrive to the equation 
\begin{align}\label{d}
y(t) - 113 y'(t) + 2848 y''(t) - 20736 y'''(t) =e^t.
\end{align}
The general solution to (\ref{d}) can be checked to be of the form $$y(t) = c_1 e^\frac{t}{81} + c_2 e^\frac{t}{16} + c_3 e^\frac{t}{16} t - \frac{e^t}{18000},$$ but only one of those functions is indeed a solution to (\ref{d}). From (\ref{d}) we can derive the following system of equations
\begin{align*}
- 20736 y(0)=1,\\
- 20736 y'(0) + 2848 y(0)=1,\\
- 20736 y''(0)+ 2848 y'(0)- 113 y(0)=1,
\end{align*}
which has as unique solution
\begin{align*}
y(0) = \frac{-1}{20736}, \quad y'(0) = \frac{-737}{13436928}, \quad y''(0) = \frac{-1932835}{34828517376}.
\end{align*}
Now, we can use this information to compute the values of $c_1, c_2$ and $c_3$ from
\begin{align*}
y(0)=c_1 + c_2 - \frac{1}{18000},\\
y'(0)=\frac{c_1}{81} + \frac{c_2}{16} + {c_3} - \frac{1}{18000},\\
y''(0)=\frac{c_1}{81^2}  + \frac{c_2}{16^2} + \frac{c_3}{8} - \frac{1}{18000}.
\end{align*}
Hence, we obtain $$c_1 = \frac{1}{27378000}, \quad c_2 = \frac{71}{9734400}, \quad c_3 = \frac{1}{3993600}.$$
Therefore, $$y(t) = \frac{1}{27378000}e^\frac{t}{81}+ \Big(\frac{71}{9734400} + \frac{1}{3993600} t\Big) e^\frac{t}{16}- \frac{1}{18000}e^t.$$
Finally, we obtain that
\begin{align*}
x(t)=\widehat{T} y(t)=\widehat{T} \Big(\frac{1}{27378000}e^\frac{t}{81}+ \Big(\frac{71}{9734400} + \frac{1}{3993600} t\Big) e^\frac{t}{16}- \frac{1}{18000}e^t\Big),
\end{align*}
where $\widehat{T}$ is given by (\ref{tebarra}), so we can see that the solution is a sum of exponentials and Mittag-Leffler functions.
\end{ex}

\section{The case of fractional order differential equations}

One would expect an analogue result for fractional order differential equations. Theoretically, we can do a similar construction. The set $\mathcal{S}_{a^+}$ was constructed precisely to guarantee the additivity of orders of fractional derivatives and that any fractional differential operator is an endomorphism in $\mathcal{S}_{a^+}$. However, assuming that the solutions to  linear FODEs with constant coefficients lie in $\mathcal{S}_{a^+}$ is not a very good hypothesis. For example, if we consider $$D_{a^+}^{\frac{1}{2}} x(t) - x(t)=0,$$ it is well known that there are non-trivial solutions of the problem such as $x(t)=(t-a)^{\frac{-1}{2}}$. However, those solutions are not detected by our procedure because they do not lie in $\mathcal{S}_{a^+}$. If they did, we would deduce, by additivity, $$(D_{a^+}^{\frac{1}{2}}+I)(D_{a^+}^{\frac{1}{2}}-I) x(t)=D_{a^+}^{1} x(t) - x(t)=0.$$ So, in fact, the only possibility for a solution in $\mathcal{S}_{a^+}$ is $x(t)=k e^t$, with $k \in \mathbb{R}$. Nevertheless, in that case, the original equation does not hold except for the trivial value $k=0$.

On the other hand, consider the equation $$D_{a^+}^{\frac{3}{2}} x(t) -  D_{a^+}^{1}x(t) - D_{a^+}^{\frac{1}{2}}x(t)+x(t)=0.$$ In this case, it is an easy check that $x(t)=k e^t$ is a solution. This can be trivially shown because the left side equals $$(D_{a^+}^{\frac{1}{2}}-D_{a^+}^{0})(D_{a^+}^{1} - D_{a^+}^{0})x(t)=0,$$ where we have used that $x(t)=k e^t$ lies in $\mathcal{S}_{a^+}$ and that $x(t)=k e^t$ is annihilated by the differential operator $D_{a^+}^{1} - D_{a^+}^{0}$. 

\subsection{Solutions in $\mathcal{S}_{a^+}$}

So, what can we say about differential equations? Of course, we have already shown that we can not expect an astonishing result as in the integral case. But, we have also exemplified that in some cases we are able to extract some solutions. If we look only for the solutions that lie in $\mathcal{S}_{a^+}$, the general procedure is valid.

\begin{notation}
The set $A_{\lambda}$ is defined as $$A_{\lambda}=\{p(t) e^{\lambda t}: p \textnormal{ is a polynomial}\}, \textnormal{ where } \lambda \in \mathbb{C}.$$
We also define $$A = \bigoplus_{\lambda \in \mathbb{C}} A_{\lambda}.$$
\end{notation}

\begin{rem}
We should note that the set $A$ contains the set of solutions of any homogeneous linear ODE with constant coefficients.
\end{rem}
\begin{prop}
The following assertions are valid:
\begin{itemize}
\item If $a = -\infty$, then $\displaystyle \mathcal{S}_{a^+} \cap A = \bigoplus_{\Re (\lambda) >0} A_{\lambda}$.
\item If $a = \infty$, then $\displaystyle \mathcal{S}_{a^+} \cap A = \bigoplus_{\Re (\lambda) <0} A_{\lambda}$.
\item If $a \in \mathbb{R}$, then $\mathcal{S}_{a^+} \cap A = \{0\}.$
\end{itemize}
\end{prop}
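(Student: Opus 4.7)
The three bullets share a common structure: since $\mathcal{S}_{a^+}\subset L^1(a,b)$ and, moreover, functions in $\mathcal{S}_{a^+}$ vanish together with all their derivatives at $a$, membership in $\mathcal{S}_{a^+}$ heavily restricts which exponents can appear in an element of $A$; the converse inclusion will be obtained by computing $I_{a^+}^{\alpha}$ directly on each summand $t^k e^{\lambda t}$.

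For $a\in\mathbb{R}$, the plan is to invoke the observation recorded earlier that any $f\in\mathcal{S}_{a^+}$ is $C^{\infty}$ and satisfies $f^{(k)}(a)=0$ for every $k$. A nonzero element of $A$ is a finite linear combination of polynomial-times-exponential functions and hence an entire function of $t$; an entire function whose Taylor series at the real point $a$ vanishes identically must vanish on all of $\mathbb{R}$, which gives $\mathcal{S}_{a^+}\cap A=\{0\}$.

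For $a=-\infty$ I would prove the two inclusions separately. For $\supset$, I would fix $\lambda$ with $\Re\lambda>0$ and $\alpha>0$ and compute directly, via the change of variables $u=x-t$,
$$(I_{-\infty}^{\alpha}e^{\lambda\,\cdot})(x)=\frac{e^{\lambda x}}{\Gamma(\alpha)}\int_{0}^{\infty}u^{\alpha-1}e^{-\lambda u}\,du=\lambda^{-\alpha}e^{\lambda x},$$
the inner integral converging precisely because $\Re\lambda>0$. Differentiating this identity $k$ times in $\lambda$ shows that $I_{-\infty}^{\alpha}$ restricts to a triangular, hence bijective, endomorphism of $A_\lambda$ in the basis $\{t^j e^{\lambda t}\}_{j\geq 0}$, with diagonal entry $\lambda^{-\alpha}$. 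Since every element of $A_\lambda$ with $\Re\lambda>0$ already lies in $L^1(-\infty,b)$, this yields $A_\lambda\subset I_{-\infty}^{\alpha}(L^1(-\infty,b))$ for every $\alpha$, hence $A_\lambda\subset\mathcal{S}_{-\infty}$; summing over $\lambda$ gives the inclusion.

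The delicate step is $\subset$ for $a=-\infty$: I must show that if $f=\sum_{i=1}^{n}p_i(t)e^{\lambda_i t}\in L^1(-\infty,b)$ with distinct $\lambda_i$ and nonzero $p_i$, then $\Re\lambda_i>0$ for every $i$. Setting $\mu=\min_i\Re\lambda_i$ and grouping the summands attaining this minimum into $g(t)=\sum_{\Re\lambda_i=\mu}p_i(t)e^{i\Im\lambda_i\,t}$, I would obtain $f(t)=e^{\mu t}g(t)+h(t)$ with $|h(t)|\leq Ce^{\mu' t}$ for some $\mu'>\mu$. If $\mu\leq 0$, then $f\in L^1$ near $-\infty$ would force $e^{\mu t}g\in L^1$ near $-\infty$, contradicting the fact that a nonzero exponential polynomial with purely imaginary frequencies is never integrable on a half-line. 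I expect this last lemma to be the main obstacle: the single-frequency case is immediate (then $|g|=|p_i|$ is a non-integrable polynomial), and the multi-frequency case would follow by induction, peeling off one summand at a time by applying the constant-coefficient operator $\prod_{j\ne i}(D-i\Im\lambda_j)^{\deg p_j+1}$ until only one term remains. The case $a=+\infty$ is entirely symmetric, using the right Riemann--Liouville integral and integrability near $+\infty$, which flips the sign of $\Re\lambda$.
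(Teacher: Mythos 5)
Your overall strategy coincides with the paper's: the case $a\in\mathbb{R}$ (analyticity of elements of $A$ plus the vanishing of all derivatives at $a$ for elements of $\mathcal{S}_{a^+}$) is exactly the paper's argument, and for the infinite endpoints the paper likewise proves $\supset$ by an explicit antiderivative computation on $A_\lambda$ and handles $\subset$ by the behaviour at the endpoint. Your $\supset$ computation is a legitimate variant: the paper iterates the order-one identity $\int_{-\infty}^x(\lambda p(t)+p'(t))e^{\lambda t}\,dt=p(x)e^{\lambda x}$ over natural orders (using the remark that an unbounded set of orders suffices to test membership in $\mathcal{S}_{a^+}$), whereas you evaluate $I^{\alpha}_{-\infty}$ directly on $e^{\lambda t}$ and differentiate in $\lambda$ to get a triangular, invertible action on $A_\lambda$; both are correct and essentially equivalent.

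The one place where your write-up would fail as stated is the reduction inside the $\subset$ inclusion for $a=-\infty$. First, from $f=e^{\mu t}g+h$ you cannot conclude that $e^{\mu t}g\in L^1$ near $-\infty$ merely because $f$ is: $h$ collects the terms with $\Re\lambda_i>\mu$, and if some of those real parts are still $\leq 0$ then $h$ itself need not be integrable near $-\infty$. The repair is a lower bound rather than a subtraction: $|h(t)|e^{-\mu t}\to 0$ as $t\to-\infty$, so $|f|\geq e^{\mu t}\bigl(|g|-o(1)\bigr)$, and it suffices to bound $|g|$ below on a set of infinite measure. Second, the key lemma (a nonzero exponential polynomial with purely imaginary frequencies is not integrable, and does not tend to zero, on a half-line) cannot be obtained by applying $\prod_{j\ne i}(D-i\Im\lambda_j)^{\deg p_j+1}$, because differentiation preserves neither $L^1$ membership nor decay, so integrability of $g$ gives no information about the single surviving term. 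The standard fix is a mean-value/orthogonality argument: if $d$ is the top degree occurring and $c_j$ the corresponding leading coefficients, then $\frac{1}{T}\int_{-T}^{0}\bigl|g(t)/t^{d}\bigr|^{2}\,dt\to\sum_j|c_j|^{2}>0$, which yields the required lower bound on a set of positive density. To be fair, the paper only asserts this inclusion is ``obvious'' for a single $A_\lambda$ and never addresses the possible cancellation between distinct frequencies, so you have correctly isolated the genuinely delicate step; you just need a sound tool for it.
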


\begin{proof} It is trivial that $$\frac{d}{dx} p(x)e^{\lambda x} = (\lambda p(x) + p'(x))e^{\lambda x}.$$ If $\Re (\lambda )> 0$, we can deduce that $$\int_{-\infty}^x (\lambda p(t) + p'(t))e^{\lambda t}dt= p(x)e^{\lambda x}.$$ Analogously, if $\Re (\lambda) < 0$, we have $$\int_{\infty}^x (\lambda p(t) + p'(t))e^{\lambda t}dt= p(x)e^{\lambda x}.$$  This shows the inclusion $\supset$ for the first two cases. Furthermore, the inclusion $\subset$ is obvious in both cases, since the functions of $S_{-\infty}$ (resp., $S_{\infty}$) necessarily vanish at $-\infty$ (resp., $\infty$) and this is impossible for any function of $A_{\lambda}$ when $\Re(\lambda) \geq 0$ (resp., $\Re(\lambda) \leq 0$).

If $a \in \mathbb{R}$ and $f \in \mathcal{S}_{a^+} \cap A$, we already know that $f$ is analytical, since any function in $A$ is analytical. Furthermore, the fact $f \in\mathcal{S}_{a^+}$ ensures that $f(a)=f'(a)=f''(a)=\dots=0$. The analytic character of $f$ forces it to be identically zero.
\end{proof}

\begin{rem} The previous result indicates us, esentially, a thing with double taste. For any linear FODE with constant coefficients, we  have the uniqueness of solution in $\mathcal{S}_{a^+}$ when $a \in \mathbb{R}$. For example, when the FODE is homogeneous, the unique solution is the trivial one. However, this property is not very nice when dealing with an ODE because we are used to have a vector space of solutions of certain dimension. Furthermore, it is also obvious that, if we want a linear FODE with constant coefficients to have a solution, we will need the right term to be in $\mathcal{S}_{a^+}$, which is quite restrictive. Nevertheless, it is not very complicated to show that $\mathcal{S}_{a^+}$ is dense in the space of continuous functions that vanish at $a$, so future work could be developed in this direction.
\end{rem}


 \bigskip \smallskip

 \it

 \noindent

$^1$ Departamento de Estad\'istica, An\'alisis Matem\'atico y Optimizaci\'on\\
Facultad de Matem\'aticas \\
Campus Vida \\
Santiago de Compostela -- 15782, SPAIN  \\[4pt]
e-mail:  daniel.cao@usc.es

$^2$ Departamento de Estad\'istica, An\'alisis Matem\'atico y Optimizaci\'on\\
Facultad de Matem\'aticas \\
Campus Vida \\
Santiago de Compostela -- 15782, SPAIN  \\[4pt]

\end{document}